\newtheorem{theo}{Theorem}
\newtheorem{prop}[theo]{Proposition}
\newtheorem{lemm}[theo]{Lemma}
\newtheorem{conj}{Conjecture}
\newtheorem{clai}{Claim}[theo]
\newcommand{\cL}{\mathcal{L}}
\newcommand{\cA}{\mathcal{A}}
\newcommand{\cC}{\mathcal{C}}
\newcommand{\bet}{\mathcal{B}}
\newcommand{\lin}{\mathcal{L}}
\newcommand{\lco}{\ell\mathcal{C}}
\newcommand{\dia}{\textrm{diam}}
\title{Locally connected graphs: metric properties}
\author{Martín Matamala$^1$, Juan Pablo Peña$^1$, José Zamora$^2$\footnote{
Support by Basal program AFB170001 and Doctoral Fellowship grant 21211955, ANID, CHILE }
\\
  ($1$) DIM-CMM, CNRS-IRL 2807, Universidad de Chile, Chile\\
  ($2$) Depto. de Matemáticas, Universidad Andres Bello, Chile.
}
\date{August 2024}
\begin{document}

\maketitle
\begin{abstract}
In this work we show that any connected locally connected graph defines a metric space having at least as many lines as vertices with only three exception: the complete multipartite graphs $K_{1,2,2}$, $K_{2,2,2}$ and $K_{2,2,2,2}$. This proves that this class  fulfills a conjecture, proposed by Chen and Chvátal, saying that any metric space on $n$ points has at least $n$ lines or a line containing all the points. 
\end{abstract}
\section{Introduction}
A connected graph $G=(V,E)$ is locally connected if for each $v\in V$, the neighborhood of $v$ induces a connected graph. We denote by $\ell\mathcal{C}$ the class of all connected locally connected graphs. This class was introduced in \cite{CP1974}, where some interesting properties of these graphs were studied. Among others, any graph $G\in \lco$ is 2-connected and if it has maximum degree at most four, then it is either Hamiltonian or isomorphic to the complete multipartite graph $K_{1,1,3}$. Most combinatorial works about locally connected graphs have explored the occurrence of global properties under restrictions on the maximum degree (\cite{kikust1975,clark1981, hendry1989, gordon2011}). Concerning algorithmic properties, in \cite{kochol2005} it was proved that there is a linear time algorithm to decide if a locally connected graph has a proper 3-coloring.
The class of connected locally connected graphs contains two well-known classes of graphs: near triangulations and 2-connected chordal graphs. 

That a near triangulation is locally connected is a consequence of the following simple characterization of plane graphs being locally connected.

\begin{prop}\label{p:planelco}
A  plane graph  $G$ is locally connected if and only if every  vertex belongs to the boundary of at most one face whose boundary is not a triangle. 
\end{prop}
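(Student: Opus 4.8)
The idea is to test local connectivity one vertex at a time, translating it into a condition on the faces seen around that vertex. I will freely use that around a vertex of a plane graph the incident edges have a cyclic (rotational) order, and — assuming $G$ is $2$-connected, which is the relevant case and which the general situation reduces to — each incident face occupies a single angular corner between two consecutive edges and is bounded by a cycle.

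\textbf{Set-up.} Fix $v\in V(G)$, put $d=\deg v$, and list the neighbours of $v$ as $u_0,u_1,\dots,u_{d-1}$ in the rotational order of the edges $vu_i$, indices read modulo $d$. For each $i$ let $f_i$ be the face of $G$ incident with $v$ lying in the corner between $vu_i$ and $vu_{i+1}$; the faces $v$ belongs to are exactly the $f_i$. The pivot of the argument is the elementary remark that $\partial f_i$ is a triangle if and only if $u_iu_{i+1}\in E(G)$ and this edge lies on $\partial f_i$, in which case $\partial f_i$ is the $3$-cycle $vu_iu_{i+1}$. Consequently, selecting the edge $u_iu_{i+1}$ from every triangular face around $v$ produces a subgraph $H_v$ of $G[N(v)]$ which is a subgraph of the cycle $u_0u_1\cdots u_{d-1}u_0$ missing precisely the edges corresponding to the non-triangular faces at $v$.

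\textbf{Condition $\Rightarrow$ locally connected.} Suppose every vertex lies on at most one non-triangular face. For a fixed $v$, at most one $f_i$ is non-triangular, so $H_v$ omits at most one edge of the cycle $u_0\cdots u_{d-1}u_0$; a spanning subgraph of a cycle missing at most one edge is connected, hence so is $G[N(v)]$. Since $v$ was arbitrary and $G$ is connected, $G\in\ell\mathcal{C}$.

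\textbf{Locally connected $\Rightarrow$ condition.} I argue by contraposition: suppose some $v$ meets two distinct non-triangular faces $f_a,f_b$. These two corners cut the rotation around $v$ into two arcs, giving a partition $N(v)=A\sqcup B$ with $A=\{u_{a+1},\dots,u_b\}$, $B=\{u_{b+1},\dots,u_a\}$, both nonempty; it suffices to show $G$ has no edge between $A$ and $B$, for then $G[N(v)]$ is disconnected. Suppose for contradiction $u_cu_{c'}\in E(G)$ with $u_c\in A$, $u_{c'}\in B$. Then $T=vu_cu_{c'}$ is a $3$-cycle; on the sphere it bounds two closed disks, and because of the cyclic positions of $a,b,c,c'$ the edges $vu_c,vu_{c'}$ separate the corners of $v$ so that the $f_a$-corner and the $f_b$-corner fall into different disks. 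Looking at the disk containing, say, the $f_a$-corner: it is wedged between the consecutive edges $vu_a,vu_{a+1}$ and has $T$ as its bounding cycle, which forces $\partial f_a$ to be a $3$-cycle — contradicting the choice of $f_a$. The configurations where $\{c,c'\}$ meets $\{a,a+1,b,b+1\}$ and the small-degree cases ($d\le 3$, and $v$ of degree $2$) are disposed of by hand.

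\textbf{Main obstacle.} The delicate point is the planarity bookkeeping in the last implication: ruling out a ``long'' edge $u_cu_{c'}$ joining the two arcs on either side of the non-triangular corners, by showing that the $3$-cycle it creates compresses one of $f_a,f_b$ down to that very triangle. Keeping careful track of which face lands in which sub-disk of $T$, and clearing the boundary index cases, is where the real work concentrates; the forward direction and the cycle-edge counting are routine.
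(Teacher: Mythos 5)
Your first implication (at most one non-triangular face at each vertex implies local connectivity) is correct for $2$-connected plane graphs and is essentially the paper's argument: all but at most one corner at $v$ is triangular, so the edges $u_iu_{i+1}$ coming from the triangular corners form a connected spanning subgraph of $G[N(v)]$. One caveat: the claimed reduction to the $2$-connected case does not work for this direction. Two triangles glued at a vertex satisfy the face condition (only the outer face is non-triangular) yet are not locally connected; the statement needs either a $2$-connectedness hypothesis or the convention that a face is counted once per corner of $v$ that it occupies.

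The converse is where your argument genuinely fails, namely at the step asserting that the disk of $T=vu_cu_{c'}$ containing the $f_a$-corner ``has $T$ as its bounding cycle, which forces $\partial f_a$ to be a $3$-cycle.'' That disk may contain further vertices and edges of $G$ in its interior (the whole arc of neighbours on that side of $T$, together with their spokes), and then $f_a$ is bounded by those edges rather than by $T$; nothing forces it to be a triangle. The gap is not repairable, because this direction of the proposition is in fact false. Take the fan $K_1+P_4$, with centre $v$ and path $p_1p_2p_3p_4$: it is $2$-connected and locally connected. Embed it with rotation $p_2,p_1,p_3,p_4$ at $v$, drawing $p_1p_2$ and $p_3p_4$ as short chords and routing $p_2p_3$ around the outside of $p_1$. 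The four faces are the triangles $vp_1p_2$ and $vp_3p_4$ and the quadrilaterals $vp_1p_2p_3$ and $vp_2p_3p_4$, so $v$ (and likewise $p_2$ and $p_3$) lies on two non-triangular faces. A degree-six analogue built from $K_1+T$, with $T$ a double star, produces a crossing edge $u_cu_{c'}$ with $\{c,c'\}$ disjoint from $\{a,a+1,b,b+1\}$, so the failure is not confined to the index cases you set aside. Since local connectivity is independent of the embedding while the face condition is not, no such equivalence can hold. For what it is worth, the paper's own proof of this direction stalls at the same spot (``any other face containing $v$ in its boundary has one edge of $P$ in its boundary, and hence its boundary is a triangle''---an edge of $P$ on a face's boundary does not make that boundary a triangle, and the quadrilateral $vp_1p_2p_3$ above is exactly such a face); only the implication you did prove correctly is what the paper actually uses, namely that near-triangulations are locally connected.
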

\begin{proof} Let $G$ be a plane graph. Assume that $G$ is locally connected. Let $v$ be a vertex and let $f$ be a face whose boundary $B$ contains $v$. If $B$ is not a triangle, then the two neighbors of $v$ in $B$ are connected by a path $P$ contained in $N(v)$ and such that $P+v$ is a cycle leaving $f$ in one of its faces. Then, any other face containing $v$ in its boundary has one edge of $P$ in its boundary, and hence its boundary is a triangle.

Conversely, let $v$ be a vertex and let $f$ be a face which is not a triangle containing $v$ in its boundary. By the assumption, all other boundaries containing $v$ are triangles. Hence, the neighborhood of $v$ induces a path in $G$. If the boundaries of all the faces containing $v$ are triangles, then the graph induced by its neighborhood is a cycle.
\end{proof}

To see that a 2-connected chordal graph $G$ is locally connected, let $u$ be a vertex in $G$ and  let $v$ and $w$ be two of its neighbors. Since $G$ is 2-connected, there is a path $P$ between $v$ and $w$ inside $g-u$. By choosing $P$ as an induced path, we get that every vertex in $P$ is a neighbor of $u$, as otherwise, $G$ contains an induced cycle of length at least four. Hence, the neighborhood of $u$ induces a connected subgraph of $G$.

The purpose of this work is to study the sets of lines defined by connected locally connected graphs. The \emph{line} defined in a graph $G=(V,E)$ by two distinct vertices $a$ and $b$ in $V$, is the set of all the vertices $c$ such that there is a shortest path containing $a, b$ and $c$. It is denoted by $\overline{ab}$. From this definition it is obvious that for three distinct vertices $a, b$ and $c$, $$c\in \overline{ab} \iff b\in \overline{ac}\iff a \in \overline{bc}.$$
To present our results it is useful to define the betweenness relation defined by $G$, denoted by $\mathcal{B}_G$ (first studied by Menger \cite{menger} and, more recently by Chv\'{a}tal \cite{chvatal2004}), as the set of all triples $(a,b,c)$, for $a, b$ and $c$ distinct vertices, such that a shortest path between $a$ and $c$ contains $b$. In terms of the distance function of $G$ defined by its shortest paths, denoted by $d_G$, we can define $\mathcal{B}_G$ as follows:
$$\mathcal{B}_G=\{(a,b,c)\in V^3\mid |\{a,b,c\}|=3 \textrm{ and } d_G(a,c)=d_G(a,b)+d_G(b,c)\}.$$
To ease the presentation we denote by $abc$ the triple $(a,b,c)$. Our main result in this work is the following theorem.
\begin{theo}\label{th:main}
    Let $G\in \ell\mathcal{C}$ with $n$ vertices. Then $G$ has less than $n$ lines if and only if it is isomorphic to one of the following complete multipartite graphs: $K_{1,2,2}, K_{2,2,2}$ or $K_{2,2,2,2}$.
\end{theo}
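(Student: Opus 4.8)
The plan is to prove the equivalence in two halves. For the implication that the three listed graphs have fewer than $n$ lines, a direct computation suffices: each has diameter $2$, and in a graph of diameter $2$ one has $\overline{ab}=\{a,b\}\cup(N(a)\cap N(b))$ when $d(a,b)=2$ and $\overline{ab}=\{a,b\}\cup\big((N(a)\triangle N(b))\setminus\{a,b\}\big)$ when $a\sim b$; running through the few orbits of pairs of vertices, one obtains that $K_{1,2,2}$ and $K_{2,2,2}$ have exactly $4$ lines and $K_{2,2,2,2}$ has exactly $7$, in each case fewer than the number of vertices.

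For the converse I would argue the contrapositive: suppose $G\in\ell\mathcal{C}$ has fewer than $n$ lines. If $\dia(G)=1$ then $G=K_n$; since every graph in $\ell\mathcal{C}$ is $2$-connected we have $n\ge 3$, and then $K_n$ has $\binom{n}{2}\ge n$ lines, a contradiction. So $\dia(G)\ge 2$. If $\dia(G)=2$, the shortest-path metric of $G$ takes only the values $1$ and $2$, and the Chen--Chv\'{a}tal conjecture is known to hold for such metric spaces; as $G$ has fewer than $n$ lines, it must have a universal line $\overline{ab}=V$. This pair cannot be an edge: if $a\sim b$, then (since $\dia(G)=2$) every other vertex is adjacent to exactly one of $a$ and $b$ and $ab$ lies in no triangle, so $b$ is isolated in the graph induced by $N(a)$ — a set of size at least $2$ by $2$-connectedness — contradicting local connectedness. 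Hence $d(a,b)=2$, and then $\overline{ab}=V$ forces $a$ and $b$ each to be adjacent to every other vertex; writing $H=G-\{a,b\}$ we get $G=H+\{a,b\}$ with $a,b$ non-adjacent and both dominating $V(H)$, and $H$ is connected because the graph induced by $N(a)$ is exactly $H$.

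The remaining analysis of the diameter-$2$ case is a line count for $G=H+\{a,b\}$. Every line of $G$ contains both of $a,b$ or neither: the lines through $a$ are $V=\overline{ab}$ and the lines $\overline{xa}=\overline{xb}$ for $x\in V(H)$, all of which pass through $b$ as well; the lines through neither are exactly the edge-lines $\overline{xy}=\{x,y\}\cup\big((N_H(x)\triangle N_H(y))\setminus\{x,y\}\big)$ over edges $xy$ of $H$; and the lines through both are $V$, the sets $\{a,b\}\cup(V(H)\setminus N_H(x))$ for $x\in V(H)$, and the sets $\{x,y,a,b\}\cup(N_H(x)\cap N_H(y))$ over non-edges $xy$ of $H$. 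Already $V$ together with the sets $\{a,b\}\cup(V(H)\setminus N_H(x))$ give $1+t(H)$ distinct lines, where $t(H)$ is the number of distinct open neighbourhoods of $H$, and since $H$ is connected on at least two vertices it has an edge, hence a further line avoiding $a$ and $b$; so if $H$ has no false twins, i.e.\ $t(H)=n-2$, then $G$ has at least $n$ lines. When $H$ does have false twins, a more careful accounting — controlling simultaneously the number of distinct neighbourhoods, the number of distinct edge-lines, and whether the non-edge-lines produce anything new — pins $H$ down to $P_3$, $C_4$ or $K_{2,2,2}$, the graphs on which all of these quantities are minimal, and these are exactly the graphs yielding $G=K_{1,2,2},K_{2,2,2},K_{2,2,2,2}$.

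The case $\dia(G)\ge 3$ is, I expect, the main obstacle, because there is no De Bruijn--Erd\H{o}s-type shortcut here: one must prove directly that $G$ has at least $n$ lines. I would fix a vertex $v$ whose eccentricity equals the diameter $D$ and use the distance layers $L_i=\{u:d(v,u)=i\}$. For $w$ in the last layer $L_D$, any vertex of $\overline{vw}$ other than $v$ lies strictly between $v$ and $w$, hence in $L_1\cup\cdots\cup L_{D-1}$; thus $\overline{vw}\cap L_D=\{w\}$, so the lines $\overline{vw}$ for $w\in L_D$ are pairwise distinct — that is $|L_D|$ lines. What remains is to produce $n-|L_D|=1+|L_1|+\cdots+|L_{D-1}|$ additional lines, injectively indexed by the vertices of the inner layers and distinct from the ones already found. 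This is where local connectedness does the real work: the connectedness of each neighbourhood $N(u)$ constrains how geodesics cross between consecutive layers, and I would use it to separate lines of the form $\overline{vu}$ and $\overline{uu'}$ for $u,u'$ in the same or in adjacent layers and to rule out unwanted coincidences. Making this bookkeeping precise — and confirming that no further exceptional graph appears — is the technical heart of the argument.
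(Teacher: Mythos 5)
Your forward direction is fine, and your reduction of the diameter-two case is a genuinely different (and attractive) idea: invoking Chv\'atal's theorem that $\{1,2\}$-metric spaces satisfy the conjecture, you deduce that a diameter-two counterexample must have a universal line, then correctly rule out the case where the universal line comes from an edge and conclude that $G$ is the join of two nonadjacent dominating vertices $a,b$ with a connected graph $H$. The paper does none of this; it fixes a vertex $x$, shows each class $[u]_x$ of the relation ``$\overline{xu}=\overline{xv}$'' is an independent module, partitions $N(x)$ by class sizes, and counts lines directly. However, your argument stops exactly where the work begins: the claim that ``a more careful accounting \ldots\ pins $H$ down to $P_3$, $C_4$ or $K_{2,2,2}$'' is asserted, not proved. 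Determining which connected graphs $H$ make $\overline{K_2}\vee H$ have fewer than $|V(H)|+2$ lines requires controlling the interaction between repeated neighbourhoods (false twins), the edge-lines, and the non-edge-lines simultaneously; this is precisely the several-page case analysis of the paper's Proposition \ref{p:diametertwo} (the sets $A_1,A_2,A_3$, the split of $A_2$ into $A_2'$ and $A_2''$, and the auxiliary line families $\cA_3,\cA_2',\cA_2'',\cA_2'''$), and nothing in your sketch substitutes for it.

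The gap in the diameter $\geq 3$ case is more serious, because there you have only the trivial observation that the lines $\overline{vw}$, $w\in L_D$, are pairwise distinct (giving $|L_D|$ lines), together with an explicit admission that producing the remaining $n-|L_D|$ lines is ``the technical heart.'' That heart is the paper's Proposition \ref{p:diametermorethantwo}, and it rests on two lemmas you do not have: Lemma \ref{l:zinthemiddle}, that in a locally connected graph $\overline{zu}=\overline{zv}$ forces $uzv\in\bet_G$ (proved by sliding along a path in $N(a)$ between a predecessor and a successor of $a$ on geodesics), and Lemma \ref{l:shortgen}, which shows that a class $[a]_z$ with a vertex beyond it must lie in $N(z)\cap N(u)$ for a common neighbour $u$. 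With these, the paper works from \emph{two} diametral vertices $x$ and $y$ at once, pairing the deficiency of $\lin^x$ against new lines $\overline{yu}$ and lines $\overline{uv}$ internal to the classes; your single-source layer picture does not by itself separate the lines $\overline{vu}$ for $u$ in inner layers, and indeed without the two lemmas one cannot even rule out $\overline{vu}=\overline{vu'}$ for $u,u'$ in the same layer. So the proposal identifies the right skeleton but leaves both halves of the converse unproved.
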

This implies that the metric space defined by $G\in \ell\mathcal{C}$ satisfies the so called Chen-Chvátal conjecture defined as follows \cite{chenchvatal}.
\begin{conj}\label{cj:cc}
Any metric space on $n$ points has a line containing the whole space (a universal line) or at least $n$ lines.     
\end{conj}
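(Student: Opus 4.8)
The target is the Chen--Chv\'{a}tal conjecture for an \emph{arbitrary} metric space on $n$ points, which is a longstanding open problem; accordingly I will lay out the attack I would attempt and indicate precisely the case that can actually be closed. The plan is to argue by contradiction: assume the space has no universal line and fewer than $n$ distinct lines, and seek a contradiction. The opening move is a counting reduction. There are $\binom{n}{2}$ pairs of points, each pair $\{a,b\}$ determining a line $\overline{ab}$, so if there are at most $n-1$ distinct lines then some line $\ell$ is generated by at least $\binom{n}{2}/(n-1)=n/2$ distinct pairs. Because no line is universal, fix a point $p\notin\ell$.

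The crux would then be a structural lemma showing that a single line carrying $\Omega(n)$ generating pairs, together with an external witness $p$, already forces $n$ distinct lines to exist. The tool available is the betweenness identity $d(a,c)=d(a,b)+d(b,c)$ and the symmetry $c\in\overline{ab}\iff a\in\overline{bc}$ recorded in the Introduction: the idea is to assign to each point $x$ a witnessing line, such as one through $p$ and $x$, and to bound how often two different points can share the same witness. If such coincidences can be kept below the number of points, one recovers $n$ distinct lines and contradicts the assumption.

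For the metric spaces that motivate this paper this program can be carried out in full. When the space is the shortest-path metric of a graph $G\in\ell\mathcal{C}$, the local-connectivity structure supplies the witnessing lines concretely, and Theorem~\ref{th:main} states the outcome: either $G$ yields at least $n$ distinct lines, or $G$ is one of $K_{1,2,2}$, $K_{2,2,2}$, $K_{2,2,2,2}$. In the first case the conjecture holds on the nose; in the second case each of these three graphs has two vertices in a common part whose line consists of those two vertices together with every vertex outside their part, which is all of $V$, so the line is universal and the conjecture again holds. Thus Theorem~\ref{th:main} together with this short check settles the conjecture for every $G\in\ell\mathcal{C}$.

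The main obstacle is the step from structured to abstract spaces. In the graph setting the adjacency and connectivity give a skeleton from which distinct lines can be read off; a general finite metric space has no such skeleton, and the counting reduction stalls precisely because one cannot, without further structure, stop the $\binom{n}{2}$ pairs from collapsing onto a sub-linear family of lines once a universal line has been excluded. Closing that gap --- raising the known sub-linear lower bounds on the number of lines to the conjectured value $n$ --- is the hard part and the reason the statement is open in general; the realistic and achievable contribution is to prove the structural lemma on a concrete rich class, which is exactly what Theorem~\ref{th:main} does for $\ell\mathcal{C}$.
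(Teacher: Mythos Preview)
The statement you were asked to prove is a \emph{conjecture} in the paper, not a theorem: the Chen--Chv\'atal conjecture is open for general finite metric spaces, and the paper does not claim to prove it. There is accordingly no ``paper's own proof'' to compare against. You recognise this from the outset, and your counting/pigeonhole sketch for the general case is, as you yourself say, only a plan that stalls at the structural lemma; it is not a proof and should not be presented as one.

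The portion of your write-up that \emph{is} a proof --- deducing Conjecture~\ref{cj:cc} for the class $\ell\mathcal{C}$ from Theorem~\ref{th:main} --- is correct and matches what the paper asserts in the sentence immediately following Theorem~\ref{th:main}. The paper states the implication without justification; you supply the missing check, namely that each of $K_{1,2,2}$, $K_{2,2,2}$, $K_{2,2,2,2}$ has a universal line (take two vertices in the same part; every other vertex is a common neighbour, hence lies between them). That verification is sound. So your proposal agrees with the paper on the only part that is actually provable here, and is slightly more explicit about it; the remainder is an honest outline of why the general conjecture remains open rather than a proof of it.
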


Chen and Chv\'{a}tal presented this conjecture as a version for metric spaces of a result known as the De-Bruijn-Erd\H{o}s theorem \cite{dbe}, which  states that if $E$ is a collection of subsets of a set $V$ such that the cardinality of each element in $E$ is at least two and at most $|V|-1$, and for each 2-set of $V$ there is exactly one set in $E$ containing it, then $\lvert E \rvert \geq n$. 
Chen and Chvátal proved that the number of lines in a metric space on $n$ points with no universal line is at $\Omega( \log n )$ \cite{chenchvatal}. This was later improved to $\Omega(n^{2/7})$ lines in \cite{chichv}, then to $\Omega(\sqrt{n})$ lines in \cite{aboulker} and, most recently to $\Omega(n^{2/3})$ lines in \cite{huang}. The latter remains the best asymptotic lower bound, up to date. Other results concerning the Chen-Chv\'{a}tal conjecture for specific types of metric spaces, mostly those defined by graphs, can be found in \cite{abbemaza, abkha, beboch, bekharo, ch2014, kantor, jualrove,montejano}.

In \cite{beboch} it was proved that chordal graphs satisfy Conjecture \ref{cj:cc}. The proof uses three ingredients.
\begin{enumerate} 
\item If a vertex $x$ has a unique neighbor $u$, then the line $\overline{xu}$ is universal. 
\item In a 2-connected chordal graph, if the lines $\overline{xu}$ and $\overline{xv}$ are equal, then the triple $uxv$ belongs to $\bet_G$. 
\item If for a simplicial vertex $x$ of a chordal graph we have that $\overline{xu}=\overline{xv}$, then the line $\overline{uv}$ is universal. 
\end{enumerate}
Hence, if a chordal graph has no universal line, then for any simplicial vertex $x$ the set of lines 
$$\{\overline{xu}\mid u\in V,u\neq x\}$$ has exactly $|V|-1$ lines
and $x$ has at least two adjacent neighbors $u$ and $v$. Since $x$ does not belong to the line $\overline{uv}$ one gets the conclusion that $G$ has at least $|V|$ lines.  Since in Theorem \ref{th:main} the existence of a universal line does not show its validity, the first and the third ingredients mentioned above are not meaningful.
However, in the proof of Theorem \ref{th:main} we shall need a generalization of the second ingredient, which we prove in Lemma \ref{l:zinthemiddle}.

It is known that 2-connected chordal graphs with $n$ vertices have a stronger property: they all have at least $n$ lines. This was proved as  
a step in the proof of the following theorem in \cite{abmaroza}.

    \begin{theo}[Theorem 2.1 in \cite{abmaroza}]\label{th:class}
Let $G$ be a graph with $n$ vertices such that every induced subgraph of $G$ either is a chordal graph, or has a cut-vertex, or has a nontrivial module.
If the numbers of lines in $G$ plus the number of bridges of $G$ is less than $n$, then $G$ is one of the following complete multipartite graphs 
$$K_{2,2},K_{2,3},K_{1,2,2},K_{2,2,2},K_{2,2,2,2}$$
or the graph $K'_{1,2,2}$, obtained from $K_{1,2,2}$ by deleting one edge incident to its vertex of degree four.
\end{theo}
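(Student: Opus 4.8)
I would prove the equivalent statement that every graph $G$ in this class satisfies $\ell(G)+b(G)\ge n$ unless $G$ is one of the six listed graphs, where $\ell(G)$ and $b(G)$ denote the numbers of lines and of bridges of $G$. The hypothesis is hereditary, so the argument proceeds by induction on $n$ with free use of the induction hypothesis on induced subgraphs. I would organise the proof around the trichotomy the hypothesis itself suggests: either (i) $G$ is $2$-connected with no nontrivial module, whence the hypothesis forces $G$ to be chordal; or (ii) $G$ has a cut-vertex (this also absorbs the disconnected degenerate case); or (iii) $G$ is $2$-connected and has a nontrivial module.

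In case (i), $G$ is $2$-connected, so $b(G)=0$ and it suffices to exhibit $n$ distinct lines. A chordal graph has a simplicial vertex $x$. By the second ingredient recalled above, $\overline{xu}=\overline{xv}$ forces $uxv\in\bet_G$; but a simplicial vertex is never strictly between two vertices, since its neighbours form a clique and any shortest path through $x$ would admit a shortcut. Hence the $n-1$ lines $\overline{xu}$ with $u\neq x$ are pairwise distinct. Because $\delta(G)\ge 2$ and $N(x)$ is a clique, $x$ has two adjacent neighbours $u,v$; the line $\overline{uv}$ omits $x$ (as $d_G(u,v)=1$) and is therefore distinct from all previous lines. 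This produces $n$ lines, so case (i) yields no exceptional graph.

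In case (ii) I would peel off a leaf block $B$ of the block-tree, attached at a cut-vertex $v$, and set $A=G-(B\setminus\{v\})$, so that $|V(A)|+|V(B)|=n+1$. Bridges are exactly additive, $b(G)=b(A)+b(B)$, and the heart of the step is a gluing lemma asserting that the lines of $G[A]$ and of $G[B]$ give rise to distinct lines of $G$, with overlap confined to the shared vertex $v$, yielding $\ell(G)+b(G)\ge (\ell(A)+b(A))+(\ell(B)+b(B))-1$. The induction hypothesis applied to $A$ and $B$ then gives at least $|V(A)|+|V(B)|-1=n$, provided neither side is exceptional. Since every exceptional graph is $2$-connected, gluing two of them at a single vertex creates long cross-lines through $v$ that restore the count to at least $n$, so the finitely many such configurations are checked directly; the pendant case $B=K_2$ is the bookkeeping base of the reduction.

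Case (iii) is where the genuine difficulty and all the exceptions live. Let $M$ be a nontrivial module. The key metric fact is that all vertices of $M$ are equidistant from every vertex outside $M$, because a shortest path leaving $M$ can exit through any external neighbour of $M$, and these are adjacent to all of $M$; consequently $M$ collapses to a single point of the quotient $H=G/M$ while $G[M]$ has diameter at most $2$. This lets me sort the lines of $G$ into those with both endpoints outside $M$ (each meeting $M$ in all of it or in none), those with one endpoint in $M$ (controlled by $H$), and those with both endpoints in $M$ (controlled by $G[M]$ together with the common external neighbours), and thereby set up a recursion relating $\ell(G)$ to $\ell(H)$ and to the internal count of $M$. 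The top modular node is either prime, which after contracting maximal modules returns to case (i), or a join of parts, which is precisely the complete-multipartite situation. Here I would compute the number of distinct lines of $K_{n_1,\dots,n_k}$ as an explicit function of the partition. The hard part is to determine exactly when this number falls below $n$: I expect $K_{2,2},K_{2,3},K_{1,2,2},K_{2,2,2},K_{2,2,2,2}$ to be the only deficient multipartite graphs, all larger ones to be safe, and the single non-multipartite exception $K'_{1,2,2}$ to appear as a boundary perturbation recovered by the same accounting after one join-edge is deleted. Making this count tight and excluding every remaining module configuration is the principal obstacle.
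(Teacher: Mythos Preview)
This statement is not proved in the present paper: it is quoted as Theorem~2.1 of \cite{abmaroza} and invoked only to contextualise Theorem~\ref{th:main} as its analogue for locally connected graphs. There is therefore no proof here against which to compare your proposal; for that you would need to consult \cite{abmaroza} directly.

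As a standalone outline, your trichotomy is the natural one, and your case~(i) matches the chordal argument from \cite{beboch} recalled in the introduction. Case~(ii), however, leans on a gluing inequality $\ell(G)+b(G)\ge(\ell(A)+b(A))+(\ell(B)+b(B))-1$ that is not as innocent as you suggest: a line of $G[A]$ passing through the cut-vertex $v$ typically extends into $B$ when computed in $G$, so distinct lines of $G[A]$ and of $G[B]$ can merge in $G$, and the overlap is not in general confined to a single shared line through $v$. Case~(iii), as you yourself acknowledge, is where the substance lies, and your sketch leaves both the module recursion and the exact multipartite enumeration open.
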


Since  graphs $K_{2,2}, K_{2,3}$ and $K'_{1,2,2}$ are not locally connected and connected locally connected graphs are 2-connected, and then they do not have bridges, Theorem \ref{th:main} can be seen as a version of Theorem \ref{th:class} for connected locally connected graphs.

As a corollary, we get that any 2-connected plane graph with $n$ vertices, where no two nontriangular faces have a vertex in common has at least $n$ lines. In the best of the authors' knowledge it is the first class of 2-connected planar graphs for which Conjecture \ref{cj:cc} has been validated.

\section{Proof of Theorem \ref{th:main}}

The rest of the paper is devoted to the proof of Theorem \ref{th:main}. 
To this goal we consider two vertices $x$ and $y$ reaching the diameter of $G$ and 
the set of lines $\mathcal{L}^x$ and $\mathcal{L}^y$, where, for a vertex $z\in V$, 
$$\mathcal{L}^z=\{\overline{zu}\mid u\in V, u\neq z\}.$$

For  $z\in V$ we define a relation $R^z$ on $V\setminus \{z\}$ given by 
$$uR^zv \iff \overline{zu}=\overline{zv}.$$

It is clear that $R^z$ is an equivalence relation and that each equivalent class $[u]_z$ is associated to a unique line $\ell$ in $\cL^z$. We made the abuse of the notation $[\ell]_z$ to denote the equivalence class of those vertices $u$ such that $\overline{zu}=\ell$. When $z$ is clear from the context we just use $[\ ]$ instead of $[\ ]_z$.

We use the standard notation for the most common notions in Graph Theory. By instance, we denote by $N_G(v)$, the set of neighbors of the vertex $v$ and by $\dia(G)$, the diameter of the graph $G$. 

For distinct vertices $z,a,b\in V$, from the definition of a line, we have that $\overline{zb}=\overline{za}$  implies that one of the triples $azb,zab$ or $zab$ belongs to $\bet_G$. In the case of a graph $G\in \lco$ we now prove that there is only one possibility: $azb\in \bet_G$. This is a generalization of the second ingredient used in \cite{beboch} for 2-connected chordal graphs. 

\begin{lemm} \label{l:zinthemiddle}Let $G\in \lco$ and let $z,a,b$ three distinct vertices such that $[b]_z=[a]_z$. Then, $azb\in \bet_G$. 
\end{lemm}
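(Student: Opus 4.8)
The plan is to argue by contradiction: assume $\overline{zb}=\overline{za}$ (that is, $[b]_z=[a]_z$) but $azb\notin\bet_G$, and produce a single vertex lying in exactly one of the two lines. First I reduce to one configuration. Since $b\in\overline{zb}=\overline{za}$, some shortest path carries all of $z,a,b$; as $azb\notin\bet_G$, the order of these three along it is $z,a,b$ or $z,b,a$, i.e.\ $zab\in\bet_G$ or $zba\in\bet_G$. Because both the hypothesis $[b]_z=[a]_z$ and the desired conclusion are symmetric under interchanging $a$ and $b$, I may assume $zab\in\bet_G$. Set $k=d_G(z,a)\ge 1$ and $m=d_G(z,b)$, so $d_G(a,b)=m-k\ge 1$, and fix a shortest $z$--$b$ path $z=p_0,p_1,\dots,p_m=b$ with $p_k=a$.

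The next step is the one place where local connectivity is used. The vertices $p_{k-1}$ and $p_{k+1}$ are two distinct neighbors of $a$, and since $N_G(a)$ induces a connected graph there is a path $p_{k-1}=w_0,w_1,\dots,w_r=p_{k+1}$ contained in $N_G(a)$. Along it the quantity $d_G(b,w_i)$ changes by at most one per step and equals $d_G(a,b)+1$ at $w_0$ and $d_G(a,b)-1$ at $w_r$, so some $w_j=:w$ has $d_G(b,w)=d_G(a,b)=m-k$. Because $w\in N_G(a)$ we get $d_G(z,w)\le k+1$, while the triangle inequality $d_G(z,w)+d_G(b,w)\ge d_G(z,b)=m$ forces $d_G(z,w)\ge k$; hence $d_G(z,w)\in\{k,k+1\}$, and clearly $w\notin\{z,a,b\}$.

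Finally I dispose of the two cases by elementary distance arithmetic. If $d_G(z,w)=k$, then checking the three possible orders of $z,a,w$ on a shortest path shows $w\notin\overline{za}$ (the only order not excluded immediately, $z$ strictly between $a$ and $w$, would demand $d_G(a,w)=2k\ge 2$, whereas $d_G(a,w)=1$), yet $d_G(z,b)=d_G(z,w)+d_G(w,b)$ places $w$ on a shortest $z$--$b$ path, so $w\in\overline{zb}$. If $d_G(z,w)=k+1$, then $d_G(z,w)=d_G(z,a)+d_G(a,w)$ puts $a$ on a shortest $z$--$w$ path, so $w\in\overline{za}$, whereas $d_G(z,w)+d_G(w,b)=m+1>m$ (and the other two orders are excluded just as easily) gives $w\notin\overline{zb}$. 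Either way $\overline{za}\neq\overline{zb}$, the desired contradiction. I expect the crux to be the middle paragraph: noticing that one should walk through $N_G(a)$ while tracking the distance to $b$, so that the triangle inequality pins $d_G(z,w)$ down to exactly the two values for which the final dichotomy works; once $w$ is in hand the rest is routine.
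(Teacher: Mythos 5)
Your proof is correct and follows essentially the same strategy as the paper's: assume $zab\in\bet_G$, take the two neighbors of $a$ pointing toward $z$ and toward $b$, join them by a path inside $N(a)$ using local connectivity, and apply the discrete intermediate-value argument to a distance function along that path to extract a vertex lying in exactly one of $\overline{za}$, $\overline{zb}$. The only (harmless) difference is that you track $d_G(b,\cdot)$ and case-split on $d_G(z,w)\in\{k,k+1\}$, whereas the paper tracks $d_G(z,\cdot)$ and refines to the last vertex at distance $k$; both reach the same contradiction.
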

\begin{proof}
    Without loss of generality, let us assume, for the sake of a contradiction, that $zab\in \bet_G$.
We shall prove that $a$ has a neighbor $a'$ such that $za'b\in \bet_G$ and $d_G(z,a)=d_G(z,a')$. This clearly is a contradiction, since $za'b\in \bet_G$ implies that $a'\in \overline{zb}$, while
$d_G(z,a)=d_G(z,a')$ and $d(a,a')=1$ implies that $a'\notin [\overline{za}]$.

To prove the existence of $a'$, we consider two neighbors of $a$, $z'$ and $b'$, such that $$d_G(z,z')+d_G(z',a)=d_G(z,a)$$ 
and 
$$d_G(a,b')+d_G(b',b)=d_G(a,b),$$
and a path $P$  between $z'$ and $b'$ ($z'$ may be $z$ and $b'$ may be $b$) included in the neighborhood of $a$. The path $P$ exists since $G\in \lco$.

On the one hand, for any two consecutive vertices $w,w'\in P$, we have that 
\begin{equation}\label{eq:cont}
|d_G(z,w)-d_G(z,w')|\leq 1
\end{equation}
On the other hand, for each $w\in P$, we have that 
$$d_G(z,z')=d_G(z,a)-1\leq d_G(z,w)\leq d_G(z,a)+1=d_G(z,b)-d_G(b',b).$$

Since $z',b'\in P$ with $d_G(z,z')=d_G(z,a)-1$ and $d_G(z,b')=d_G(z,a)+1$, from Inequality \ref{eq:cont}, we get that there is $w\in P$ such that $d_G(z,w)=d_G(z,a)$. 
Let $a'$ be the last vertex in $P$ (from $z'$ to $b'$) whose distance to $z$ is $d(z,a)$. Then, the neighbor $w$ of $a'$ in the subpath of $P$ from $a'$ to $b$ has distance to $z$ equal to $d_G(z,a)+1$. Since it is also a neighbor of $a$ we have that it belongs to $\overline{za}$ and thus to $\overline{zb}$. Hence, 
$d_G(z,b)=d_G(z,w)+d_G(w,b)$ which shows that 
\begin{eqnarray*}
d_G(a',b)&\leq &1+d_G(z,b)-d_G(z,w)\\
& = & d_G(z,b)-d_G(z,a)\\
& = &d_G(a,b).
\end{eqnarray*}
Since $d(z,a')=d(z,a)$ and $zab\in \bet_G$, we conclude that $za'b\in \bet_G$.
    
\end{proof}

\begin{lemm}\label{l:shortgen}
Let $a$ be such that $|[a]_z|\geq 2$ such that there is a vertex $v\in V$, $v\neq z,a$ with $zav\in \bet_G$. Then, there is a vertex $u\in N(a)$ such that $[a]_z\subseteq N(u)\cap N(z)$, and for each $b\in[a]_z$, $zbu\in \bet_G$.
\end{lemm}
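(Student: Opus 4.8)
The plan is to find $u$ as a neighbour of $a$ sitting one step beyond $a$ on a shortest path that extends the common line, and then to use local connectedness to check that this $u$ is adjacent to every vertex of $[a]_z$.

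I would first normalise the hypothesis: if $d_G(a,v)\geq 2$, replace $v$ by the neighbour $v'$ of $a$ on a fixed shortest $z$–$v$ path; then $zav'\in\bet_G$ still holds and $d_G(z,v')=d_G(z,a)+1$, so from now on I may assume $v\sim a$ and $d_G(z,v)=d_G(z,a)+1$. Fix also some $b\in[a]_z\setminus\{a\}$; by Lemma~\ref{l:zinthemiddle} we have $azb\in\bet_G$, and, applied to any two members of $[a]_z$, Lemma~\ref{l:zinthemiddle} shows that $z$ lies on a shortest path between every two vertices of $[a]_z$. The first substantial step is to prove $d_G(z,a)=1$. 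Since $zav\in\bet_G$ we have $v\in\overline{za}$, hence $v\in\overline{zb'}$ for every $b'\in[a]_z$, so one of $zb'v$, $zvb'$, $b'zv$ lies in $\bet_G$. The case $zvb'$ ($v$ between $z$ and $b'$) is impossible: together with $azb'\in\bet_G$ and $v\sim a$ the triangle inequality forces $d_G(a,b')\leq d_G(z,b')-d_G(z,a)$, contradicting $d_G(a,b')=d_G(z,a)+d_G(z,b')$. If $zb'v$ holds ($b'$ between $z$ and $v$), then from $d_G(z,v)=d_G(z,b')+d_G(b',v)$, $d_G(z,v)=d_G(z,a)+1$, $d_G(a,b')=d_G(z,a)+d_G(z,b')$ and $d_G(a,b')\leq 1+d_G(b',v)$ one gets $d_G(z,b')\leq 1$. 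Since $a$ is trivially in the case $zb'v$ (this is just $zav\in\bet_G$), we conclude $d_G(z,a)=1$, and therefore $d_G(z,v)=2$.

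With $d_G(z,a)=1$ established, I would rerun the trichotomy for a general $b'\in[a]_z$: the case $zvb'$ is excluded as above, and in the case $zb'v$ the identity $2=d_G(z,v)=d_G(z,b')+d_G(b',v)$ forces $d_G(z,b')=d_G(b',v)=1$, i.e. $b'\sim z$ and $b'\sim v$. Hence, \emph{provided the remaining case $b'zv$ (that is, $z$ between $b'$ and $v$) never occurs}, every $b'\in[a]_z$ is adjacent to both $z$ and $v$, so $[a]_z\subseteq N(v)\cap N(z)$, and $zb'v\in\bet_G$ holds automatically because $d_G(z,v)=2=d_G(z,b')+d_G(b',v)$; taking $u=v$, and noting $v\in N(a)$ since $v\sim a$, then finishes the proof.

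So the crux — and where I expect the real work — is to rule out $b'zv\in\bet_G$ for $b'\in[a]_z$. Assume $z$ is between some such $b'$ and $v$. Then $d_G(b',v)=d_G(z,b')+2=d_G(b',a)+1$, so $a$ lies on a shortest $b'$–$v$ path and in fact $b',\dots,z,a,v$ is a geodesic. Because $N(a)$ is connected and contains the non-adjacent pair $\{z,v\}$, there is a vertex $s\sim a$ on a $z$–$v$ path inside $N(a)$ that is ``new'', i.e. not on that geodesic (one checks $s\neq b'$ since $b'\not\sim a$, and $s\neq z,v$), and one may take $s\sim v$. Now $d_G(z,s)\leq 2$; if $d_G(z,s)=2$ then $zas\in\bet_G$, so $s\in\overline{za}=\overline{zb'}$, and analysing the position of $s$ on this line — discarding ``$b'$ between $z$ and $s$'' because it would produce the short detour $b'\!-\!s\!-\!v$ contradicting $d_G(b',v)=d_G(z,b')+2$ — leaves $s$ in exactly the role $v$ played. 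One then iterates, or chooses $b'$ (respectively the neighbourhood path) minimal in a suitable sense; the delicate point is the simultaneous bookkeeping of the distances $d_G(\cdot,z)$, $d_G(\cdot,a)$, $d_G(\cdot,b')$, $d_G(\cdot,v)$ along paths inside $N(a)$ (and possibly $N(v)$ or $N(z)$), and making the process terminate — or produce a short $b'$–$v$ path outright — is the main obstacle.
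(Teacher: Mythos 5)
Your proposal has two genuine gaps, and the second is exactly the point your write-up flags as ``the main obstacle''. First, the deduction of $d_G(z,a)=1$ does not go through. In the case $zb'v\in\bet_G$ you derive $d_G(z,b')\leq 1$ from a chain that uses $d_G(a,b')=d_G(z,a)+d_G(z,b')$, i.e.\ Lemma~\ref{l:zinthemiddle} applied to the pair $a\neq b'$. Setting $b'=a$ in that chain is vacuous ($d_G(a,a)=0$ and the identity from Lemma~\ref{l:zinthemiddle} is simply unavailable), so ``$a$ is trivially in the case $zb'v$'' yields nothing: you only get $d_G(z,b')=1$ for those $b'\neq a$ that happen to fall into that case, which places no bound on $d_G(z,a)$. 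In fact $d_G(z,a)=1$ is essentially the whole content of the lemma (it is the claim $[a]_z\subseteq N(z)$), and it cannot be extracted this early.

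Second, the case $b'zv\in\bet_G$ is never excluded, and with your choice $u=v$ for an \emph{arbitrary} neighbour $v$ of $a$ with $zav\in\bet_G$ I see no way to exclude it: nothing ties that particular $v$ to $b'$. The paper avoids this by not taking $u=v$. It takes the path $P\subseteq N(a)$ from $z'$ (a neighbour of $a$ towards $z$) to $v'$ (a neighbour of $a$ towards $v$), lets $w$ be the \emph{last} vertex of $P$ with $d_G(z,w)=d_G(z,a)$, and sets $u$ to be the successor of $w$ on $P$, so that $d_G(z,u)=d_G(z,w)+1$. This specific choice supplies the extra relation $uwz\in\bet_G$, which is precisely what kills the bad case $uzb\in\bet_G$: it would force $wzb\in\bet_G$, hence $w\in\overline{zb}=\overline{za}$, contradicting $d_G(z,w)=d_G(z,a)$ together with $w\in N(a)$. (The case $zub$ is killed via Lemma~\ref{l:zinthemiddle} much as you do.) Finally, the inclusion $[a]_z\subseteq N(u)\cap N(z)$ --- and with it $d_G(z,a)=1$ --- is obtained at the very end by a symmetrization your argument is missing: once $zbu\in\bet_G$ holds for every $b\in[a]_z$, run the whole construction again with $b$ in the role of $a$ to get some $s\in N(b)$ with $d_G(s,z)=d_G(s,a)+d_G(a,z)=d_G(z,b)+1$; adding this to $d_G(u,z)=d_G(u,b)+d_G(b,z)=d_G(z,a)+1$ gives $d_G(u,b)+d_G(s,a)=2$, hence $d_G(u,b)=1$ and $d_G(a,b)=2$, which together with $azb\in\bet_G$ forces $d_G(z,a)=d_G(z,b)=1$. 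Without that final double counting, establishing adjacency of $u$ to all of $[a]_z$ (and of $[a]_z$ to $z$) remains open in your approach.
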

\begin{proof} Let $v\in V$, $v\neq z,a$ with $zav\in \bet_G$. We take $v'$ (resp. $z'$) as the neighbor of $a$ in a shortest path between $a$ and $v$ (resp. $z$). Let $P$ be a path contained in the neighborhood of $a$ between $z'$ and $v'$. Since $G\in \lco$ such a path $P$ exists.  

Let $w$ be the last vertex in $P$ such that $d_G(z,w)=d_G(z,a)$ and let denote by $u$ the neighbor of $w$ in the subpath of $P$ between $w$ and $v'$.
Then, 
$$d_G(z,u)=d_G(z,a)+1=d_G(z,w)+1,$$
 which implies that $zwu\in \bet_G$ and that $zau\in \bet_G$, since $u$ is a neighbor of $a$.  

We prove that $zbu\in \bet_G$, for each $b\in [a]_z$. We know that $zau\in \bet_G$ which implies that $u\in \overline{za}$. Then, $u\in \overline{zb}$. Hence, if $zbu\notin \bet_G$, then we must have that $uzb$ or $zub$ belong to $\bet_G$. We prove that both cases lead to a contradiction.

\begin{enumerate}
\item $zub\in \bet_G$. By Lemma \ref{l:zinthemiddle} we get that $azb\in \bet_G$. When $azb$ and $zub$ belong to $\bet_G$ one gets that $azu$ also belongs to $\bet_G$, which is a contradiction with $zau\in \bet_G$.
\item $uzb\in \bet_G$. We already know that $uwz\in \bet_G$. Together with the fact that $uzb\in \bet_G$, we get that $wzb\in \bet_G$, which implies that $w\in \overline{zb}$. But, since $d_G(z,a)=d_G(z,w)$ and $d_G(a,w)=1$ we get that $w\notin \overline{za}$, which is a contradiction with the assumption $b\in [a]_z$.
\end{enumerate}

So far, we have proved that under the assumption that there is $v\in V$ such that $zav\in \bet_G$, then there is $u\in N(a)$ such that $zbu\in \bet_G$, for each $b\in [a]_z$, from which we get that 
$$
d_G(u,b)+d_G(b,z)=d_G(u,z)=d_G(z,a)+1,$$
for each $b\in [a]_z$.
By applying the same argument to $zbu\in \bet_G$, we get that there is a vertex $s\in N(b)$ such that 
$$
d_G(s,a)+d_G(a,z)=d_G(s,z)=d_G(z,b)+1.$$
By summing the last two equalities we get that 
$d_G(u,b)+d_G(s,a)=2$ which implies that $d_G(u,b)=d_G(s,a)=1$ and thus, $u\in N(b)\cap N(a)$ and $d_G(a,b)=2$. Hence, $[a]_z\subseteq  N(z)\cap N(u)$, as $azb\in \bet_G$.
\end{proof}
We now split the proof of Theorem \ref{th:main} in two parts. In the first part, we deal with graphs with diameter at least three; in the second part, we consider graphs of diameter two.

\begin{prop}\label{p:diametermorethantwo}
 Let $G$ be a connected locally connected graph of diameter at least three and $n$ vertices. Then, $G$ has at least $n$ lines.   
\end{prop}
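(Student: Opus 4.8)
The plan is to fix a diametral pair $x,y$ with $d_G(x,y)=\dia(G)\ge 3$ and count lines through $x$. The idea is that when $G$ has no universal line the map $u\mapsto [u]_x$ from $V\setminus\{x\}$ to $\cL^x$ is ``almost'' injective: by Lemma~\ref{l:zinthemiddle}, whenever $|[a]_x|\ge 2$ the elements of $[a]_x$ must lie in the middle of shortest paths from $x$, i.e.\ $x b_1 b_2$ can never happen for $b_1,b_2\in[a]_x$, so distinct elements of $[a]_x$ are pairwise at the same distance from $x$ and mutually ``incomparable.'' First I would show that a nontrivial class $[a]_x$ can only occur when $a$ itself is an interior vertex of some shortest path from $x$ (otherwise $\overline{xa}$ would be forced to be too small, or one uses $y$: since $d_G(x,y)$ is maximal, $y$ is not ``beyond'' anything, so if $[a]_x$ is nontrivial then $a\neq y$ and there is $v$ with $xav\in\bet_G$), which puts us exactly in the hypothesis of Lemma~\ref{l:shortgen}. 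That lemma then hands us a common neighbor $u\in N(a)$ with $[a]_x\subseteq N(x)\cap N(u)$ and $d_G(x,a)=2$; in particular every nontrivial $R^x$-class sits inside $N(x)$, at distance $2$ from $x$.

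Next I would turn this into a counting argument. Let $k=|\cL^x|$. Then $n-1=\sum_{\ell\in\cL^x}|[\ell]_x|$, and the classes of size $\ge 2$ are precisely some classes of vertices in $N(x)$ coming in ``bundles'' each contained in $N(x)\cap N(u)$ for a suitable $u$. If all classes are trivial, $|\cL^x|=n-1$, and I only need to exhibit one extra line not of the form $\overline{xu}$: take two adjacent neighbours of $x$ (they exist since $N(x)$ induces a connected graph on $\deg(x)\ge 2$ vertices), say $p,q$ with $pq\in E$; then $x\notin\overline{pq}$ because $d_G(p,q)=1$ while $x$ is adjacent to both, so $\overline{pq}$ is a genuinely new line and $G$ has $\ge n$ lines. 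If some class is nontrivial, say $|[a]_x|=t\ge 2$ with $[a]_x\subseteq N(x)\cap N(u)$, I would look for lines through $u$, or more precisely combine $\cL^x$ with a few carefully chosen extra lines (for instance $\overline{bu}$ for $b\in[a]_x$, or lines among the vertices of $N(x)$) to make up the deficit $\sum_{|[\ell]_x|\ge2}(|[\ell]_x|-1)$. The point of Lemma~\ref{l:shortgen} giving $d_G(x,a)=2$ and the ``$zbu$'' betweenness for all $b\in[a]_x$ is that these new lines through $u$ behave like the near-triangulation/chordal situation: each $b\in[a]_x$ lies between $x$ and $u$, so $\overline{xb}=\overline{xu}$ as well (i.e.\ $u$ is itself in the class), and one gets structure forcing either many lines or a very constrained local picture.

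The main obstacle, I expect, is the bookkeeping when several nontrivial classes interact — e.g.\ two distinct nontrivial classes $[a]_x,[a']_x$ both inside $N(x)$, possibly sharing the auxiliary vertex $u$ or having their $u$'s adjacent — and showing that in every such configuration one can still name at least $1+\sum(|[\ell]_x|-1)$ lines outside $\cL^x$. This is exactly where the diameter-$\ge 3$ hypothesis should do real work: the far vertex $y$ (and vertices on a shortest $x$–$y$ path beyond distance $2$) is never in any nontrivial class, and shortest paths from $x$ through $a$ continue past $u$ to reach such vertices, so $u$ has its own rich line structure $\cL^u$ that cannot collapse the same way. I would make the final estimate by a double-counting/injectivity argument comparing $\cL^x$ with $\cL^u$ for one such $u$, or by an extremal choice of $a$ (say with $|[a]_x|$ maximum, or with $d_G(x,a)$ minimum among nontrivial classes), reducing to finitely many local cases. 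I do not expect the complete-multipartite exceptions to appear here at all, since $K_{1,2,2},K_{2,2,2},K_{2,2,2,2}$ all have diameter $2$; so the proposition should come out as a clean ``$\ge n$ lines, no exceptions'' statement, with all the delicate exceptional analysis deferred to the diameter-$2$ case treated afterward.
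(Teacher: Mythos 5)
Your proposal handles the easy case correctly (all $R^x$-classes trivial gives $|\cL^x|=n-1$, plus one line $\overline{pq}$ for an edge $pq$ inside $N(x)$ — this is exactly what the paper does). But the main case contains a genuine gap built on two incorrect structural claims. First, you assert that every nontrivial class $[a]_x$ satisfies the hypothesis of Lemma~\ref{l:shortgen}, i.e.\ that some $v$ exists with $xav\in\bet_G$, "since $y$ is not beyond anything, so $a\neq y$." That is a non sequitur: $a\neq y$ does not produce a shortest path from $x$ extending past $a$; a vertex at distance less than $\dia(G)$ from $x$ can still have no shortest path from $x$ continuing through it. Consequently you cannot conclude that every nontrivial class lies in $N(x)$. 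The paper explicitly allows for the other kind of class (its set $\cC^x_2$ of classes not contained in $N(x)$) and needs a separate argument (Claim~\ref{cl:longgen}) showing that for such a class the lines $\overline{uv}$, $u,v\in[\ell]_x$, are determined by strict betweenness, contain $x$, and are not of the form $\overline{xw}$. Second, you claim that $xbu\in\bet_G$ for all $b\in[a]_x$ forces $\overline{xb}=\overline{xu}$, "i.e.\ $u$ is itself in the class." This is backwards: by Lemma~\ref{l:zinthemiddle}, $\overline{xb}=\overline{xu}$ would force $bxu\in\bet_G$, contradicting $xbu\in\bet_G$; so $u$ is never in the class.

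Beyond these errors, the decisive counting step is missing rather than merely different. Your plan is to repair the deficit $\sum(|[\ell]_x|-1)$ with lines through the auxiliary vertex $u$ of Lemma~\ref{l:shortgen} or among vertices of $N(x)$, deferred to "a double-counting argument" and "finitely many local cases." The paper instead uses the \emph{other} diametral endpoint $y$: it takes one representative set $V_1\cup V_2$ of the nontrivial classes (all of $[\ell]_x$ for classes inside $N(x)$, and all but the possible far vertex for the others), proves that the lines $\overline{yu}$, $u\in V_1\cup V_2$, are pairwise distinct (Claim~\ref{cl:different}, using $uyv\in\bet_G$ and the diameter bound) and disjoint from $\cL^x$ (Claim~\ref{cl:disjoint}), and then closes the gap with either an edge in $N(x)$ or a line $\overline{uv}$ from Claim~\ref{cl:longgen}. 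Without an argument of this kind — in particular without any mechanism guaranteeing that your proposed extra lines are pairwise distinct and avoid $\cL^x$ — the proof does not go through.
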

\begin{proof}

\begin{clai}\label{cl:longgen}
Let $a$ be such that $|[a]_z|\geq 2$ and $[a]_z$ not included in $N(z)$. Then, for each $u,v\in [a]_z$ the line $\overline{uv}$ is given by 
$$\overline{uv}=\{w\in V\mid d_G(u,w)+d_G(w,v)=d_G(u,v)\},$$
it does contains $z$ and does not belong to $\lin^z$.
\end{clai}
\begin{proof}
    From Lemma \ref{l:zinthemiddle}
 we know that $uzv\in \bet_G$ and thus $z\in \overline{uv}$. Let $w\in V$. If $wvu\in \bet_G$, then $wuz\in \bet_G$, which is not possible from Lemma \ref{l:shortgen}. Similarly, the triple $vuw$ does not belong to $\bet_G$. Hence, $w\in \overline{uv}$ if and only if $uwv\in \bet_G$, proving the first statement. 

 Let us assume that $\overline{uv}=\overline{zw}$, for some $w\in V$, $w\neq z$.
 Without loss of generality let us assume that $d_G(u,w)\leq d_G(u,z)$. Then, $uzw\notin \bet_G$. Since $[a]_z$ is not included in $N(z)$, from Lemma \ref{l:shortgen}, we get that $zuw\notin \bet_G$. Thus, $u\in \overline{zw}$ if and only if $zwu\in \bet_G$. 
 Let $w'\in N(z)$ such that $zw'w\in \bet_G$ and let $v'\in N(z)$ such that $vv'z\in \bet_G$. Since $G\in \lco$, we know that there is a path $P$ in $N(z)$ between $v'$ and $w'$. Hence, there are $z', z''$ in $P$ such that $wzz'\notin \bet_G$ and $wzz''\in \bet_G$. As we did in the proof of Lemma \ref{l:zinthemiddle} we now show that $wz'v\in \bet_G$ and that $z'\notin \overline{wz}$.

From $wzz''\in \bet_G$ we get that $d_G(w,z'')=d_G(w,z)+1=d_G(w,v')$. Moreover,
$d_G(z',v)\leq d_G(z'',v)+1=d_G(z,v)$ and $d_G(w,z')\leq d_G(w,z)$, since $wzz'\notin \bet_G$ and $z'\in N(z)$. Therefore, 

$$d_G(w,v)\leq d_G(w,z')+d_G(z',v)\leq d_G(w,z)+d_G(z,v)=d_G(w,v),$$
where the last equality is due to $wzv\in \bet_G$. We conclude that 
$wz'v\in \bet_G$, which implies that $z'\in \overline{uv}$, and thus $d_G(w,z')=d_G(w,z)$. Hence, we get that $z'\notin \overline{xw}$, and thus the contradiction.
 
 \end{proof}

 Let $x$ and $y$ vertices such that $d_G(x,y)=diam(G)$.
By using previous results we can partition $\lin^x$ into the following sets.
$$\lin^x_0:=\{\ell\in \lin^x\mid |[\ell]_x|=1\},$$
$$\cC^x_1:=\{\ell\in \lin^x\mid |[\ell]_x|\geq 2 \textrm{ and } [\ell]_x\subseteq N(x)\}$$
and
$$\cC^x_2:=\{\ell\in \lin^x\mid |[\ell]_x|\geq 2 \textrm{ and } [\ell]_x\not \subseteq N(x)\}.$$

Let $V_1$ and $V_2$ the sets of vertices defined by 
$$V_1=\bigcup_{\ell \in \cC^x_1}[\ell]_x 
$$and
$$V_2=\bigcup_{\ell \in \cC^x_2}[\ell]_x\cap \{u\in V\mid d_G(x,u)\leq \dia(G)/2\}.
$$
We also define $\cC^y_{1,2}$ by 
$$\cC^y_{1,2}=\{\overline{yu}\mid u\in V_1\cup V_2\}.$$

\begin{clai}\label{cl:different}
For two distinct vertices $u,v\in V_1\cup V_2$ we have that $\overline{yu}\neq \overline{yv}.$  \end{clai} 
\begin{proof} Let us assume that $\overline{yu}=\overline{yv}.$
    From Lemma \ref{l:zinthemiddle} applied to $y$, we have that $uyv\in \bet_G$. If $u\in V_1$, then $d_G(x,u)=1$ and then $d_G(y,u)\geq \dia(G)-1$. Hence, $d_G(u,y)=\dia(G)-1$ and $d_G(y,v)=1$ which implies that $d_G(x,v)\geq \dia(G)-1$. As $\dia(G)\geq 3$ we get that $d_G(x,v)\geq \dia(G)/2>1$ which implies that $v\notin V_1\cup V_2$. 
    Therefore, we can assume that $u,v\in V_2$. In this case, we obtain the following contradiction:
    $$\dia(G)\geq d_G(u,v)=d_G(u,y)+d_G(y,v)>\dia(G),$$
since when $u,v\in V_2$ we get that $d_G(x,u),d_G(x,v)\leq \dia(G)/2$ and $xvy,xuy\notin \bet_G$, which implies that $d_G(y,u),d_G(y,v)>\dia(G)/2.$
\end{proof}

Therefore,
the set  ${\cC}^y_{1,2}$
    has exactly $|V_1|+|V_2|$ lines.

\begin{clai}\label{cl:disjoint} $\lin^x\cap \cC^y_{1,2}$ is empty.
\end{clai}
\begin{proof} 
Notice that for $v\in V_2$ we have that $xvy\notin \bet_G$, by Lemma \ref{l:shortgen}
, which implies that $x\notin \overline{yv}$, and thus $\overline{yv}\notin \lin^x$, when $v\in V_2$. 

Let $u\in V$ and $v\in V_1$ such that $\overline{xu}=\overline{yv}$.
Since $d_G(x,y)=\dia(G)$ we have that $xuy,xvy\in \bet_G$.
As $v\in V_1$, again from Lemma \ref{l:shortgen}, we have that $d_G(x,v)=1$ and that $[v]_x\subseteq N(x)$.
Then $x\in \overline{yv}$ if and only if $d_G(v,y)=\dia(G)-1$. This implies that $\overline{yv}\cap N(x)=\{v\}$.

As $\overline{xu}=\overline{yv}$ we have that $N(x)\cap \overline{xu}=\{v\}$ and hence,  $u=v$ or $xvu\in \bet_G$.
Since $[v]_x\subseteq \overline{xv}$ and $|[v]_x|\geq 2$, we get that $u\neq v$. But, when $xvu\in \bet_G$ we have that $u\in \overline{xv}$ which in turns implies that $[v]_x\subseteq \overline{xu}$ leading to a contradiction.
\end{proof}

\begin{clai}\label{cl:counting}
$G$ has at least $n$ lines.
\end{clai}
\begin{proof} From the definitions of $\lin^x$, $[\ell]_x$, $V_1$ and $V_2$, we have that 
$$|\lin^x|=|V|-1-\sum_{\ell\in \cC^x_1\cup \cC^x_2}\big(|[\ell]_x|-1\big),$$
$$|V_1|=\sum_{\ell\in \cC^x_1}|[\ell]_x|$$ and
$$|V_2|\geq \sum_{\ell\in \cC^x_2}\big(|[\ell]_x|-1\big).$$
The later inequality comes from the fact that at most one vertex in $[\ell]_x$ has $d_G(x,u)>\dia(G)/2$, by Lemma \ref{l:zinthemiddle}.

From Claim \ref{cl:disjoint} we have that 
\begin{align*}  
|\lin^x\cup \cC^y_{1,2}| & \geq |V|-1-
\sum_{\ell\in \cC^x_1\cup \cC^x_2}\big(|[\ell]_x|-1\big)+\sum_{\ell\in \cC^x_1}|[\ell]_x|+\sum_{\ell\in \cC^x_2}\big(|[\ell]_x|-1\big)\\
&=|V|-1+|\cC^x_1|.
\end{align*}

Hence, we can continue under the assumption that $\cC^x_1$ is empty.
If $\cC^x_2$ is empty we have that $|\lin^x|=|V|-1$.   Since $G\in \lco$, there is an edge $uv$, with $u,v\in N(x)$. Hence, $x\notin \overline{uv}$ and thus $\overline{uv}\notin \lin^x$ and we get the conclusion. 

If $\cC^x_2$ is non-empty, then for $\ell\in \cC^x_2$ and $u,v\in [\ell]_x$ we have that  $uxv\in \bet_G$, by Lemma \ref{l:zinthemiddle}, and $xuy,xvy\notin \bet_G$, by Lemma \ref{l:shortgen}. Hence, $uvy$, $yuv\notin \bet_G$. 

Moreover,
$$d_G(u,y)+d_G(y,v)+\dia(G)=d_G(u,y)+d_G(y,v)+d_G(x,u)+d_G(v,x)>2\dia(G)$$
which proves that $uyv\notin \bet_G$ either. Therefore, $y\notin \overline{uv}$
which implies that $\overline{uv}\notin \cC^y_{1,2}$.
To obtain the conclusion,  we use Claim \ref{cl:longgen}, which shows that $\overline{uv}\notin \lin^x.$
\end{proof}

\end{proof}

We now extend our result to connected locally connected graphs of diameter two. In this case we characterize those graphs in that class which have fewer lines than vertices.

\begin{prop}\label{p:diametertwo}
    Let $G$ be a connected locally connected graph of diameter two with $n$ vertices and fewer than $n$ lines. Then $G$ is isomorphic to $K_{1,2,2},K_{2,2,2}$ or $K_{2,2,2,2}$.
\end{prop}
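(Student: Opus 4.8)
The plan is to analyze a connected locally connected graph $G$ of diameter two that has fewer than $n$ lines, and to show it must be one of the three exceptional complete multipartite graphs. First I would fix a vertex $z$ and work with the set of lines $\lin^z$ and the equivalence classes $[u]_z$. By Lemma~\ref{l:zinthemiddle}, whenever $|[a]_z|\geq 2$ the vertices of $[a]_z$ are ``antipodal'' through $z$, i.e.\ $azb\in\bet_G$ for distinct $a,b\in[a]_z$; since $\dia(G)=2$ this forces $d_G(z,a)=d_G(z,b)=1$, so every nontrivial class lies in $N(z)$. In other words, in the diameter-two case $\cC^x_2$ is empty and every class $[a]_z$ with $\geq 2$ elements satisfies $[a]_z\subseteq N(z)\cap N(u)$ for a common neighbor $u$, by Lemma~\ref{l:shortgen}. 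The counting identity $|\lin^z|=n-1-\sum_{\ell}(|[\ell]_z|-1)$, summed over nontrivial classes, together with the hypothesis $|\lin(G)|<n$, then gives a strong arithmetic constraint: roughly, $\sum_{\ell}(|[\ell]_z|-1)\geq 2$ (accounting for the at least one ``extra'' line $\overline{uv}$ not in $\lin^z$ coming from an edge inside $N(z)$, which exists because $G\in\lco$), so the nontrivial classes together must ``absorb'' enough vertices.

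Next I would extract structural consequences of having a large nontrivial class. If $[a]_z$ is a nontrivial class with common neighbor $u$, then $[a]_z\subseteq N(z)\cap N(u)$ and, by Lemma~\ref{l:zinthemiddle} applied symmetrically, the vertices of $[a]_z$ are pairwise nonadjacent (each pair realizes $azb\in\bet_G$ with $d_G(z,b)=1$, so $d_G(a,b)=2$). This says $[a]_z\cup\{z\}$ (and also $[a]_z\cup\{u\}$) behaves like one side of a complete multipartite structure. The core of the argument is to show that this forces $G$ itself to be complete multipartite: I would argue that for a diameter-two locally connected graph with fewer than $n$ lines, every pair of non-adjacent vertices lies in a common nontrivial class for some base point, and the "sidedness" relation (being non-adjacent and having the same neighborhood) is an equivalence relation whose classes are the parts. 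Then $G=K_{n_1,\dots,n_k}$ with all $n_i\geq 1$, and it remains to compute the number of lines of $K_{n_1,\dots,n_k}$ directly — this is a finite, explicit computation — and to check that $|\lin(G)|<n$ holds exactly for $K_{1,2,2}$, $K_{2,2,2}$, $K_{2,2,2,2}$ (and to rule out, using local connectedness, the degenerate cases like $K_{1,n-1}$, $K_{2,2}$, $K_{2,3}$, $K'_{1,2,2}$ which are not in $\lco$ or are already excluded by the diameter-two/$\lco$ hypotheses).

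The main obstacle I anticipate is the passage "fewer than $n$ lines $\Rightarrow$ $G$ is complete multipartite." It is not automatic: one must show that the only way to lose $n - |\lin(G)| \geq 1$ lines, after crediting the forced extra line $\overline{uv}$ (with $u,v$ adjacent in $N(z)$) that escapes $\lin^z$, is for many vertices to collapse into nontrivial classes, and that these collapses are globally consistent — i.e.\ that $u\sim v$ and $u,v$ having the same non-neighbors forces a genuine multipartite partition rather than some sporadic configuration. I would handle this by a careful bookkeeping argument: for a well-chosen base vertex $z$ (e.g.\ one of minimum degree, or one lying in a largest nontrivial class), combine the lower bound on $|\lin^z|$ with additional lines of the form $\overline{uv}$ for edges $uv\subseteq N(z)$ (each missing from $\lin^z$ by Lemma~\ref{l:shortgen}-type reasoning, since $z\notin\overline{uv}$), keeping track of how many distinct such lines there are; the inequality $|\lin(G)|<n$ will then be so tight that it pins down the neighborhood structure around $z$, and bootstrapping this over all vertices yields complete multipartiteness. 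A secondary, but purely mechanical, obstacle is verifying the line count of $K_{n_1,\dots,n_k}$: one checks that $\overline{ab}$ for $a,b$ in the same part is the whole vertex set minus..., while $\overline{ab}$ for $a,b$ in different parts is $\{a,b\}$ plus the union of all parts of size $1$ other than those of $a,b$ — so distinct lines correspond to unordered pairs of parts plus the pairs within parts of size $\geq 2$, and the count drops below $n$ only in the three listed small cases.
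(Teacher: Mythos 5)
Your setup matches the paper's: fixing a base vertex, using Lemma~\ref{l:zinthemiddle} and Lemma~\ref{l:shortgen} to show every nontrivial class is an independent subset of $N(z)$ with a common neighbor, writing $|\lin^z|=n-1-\sum_{\ell}(|[\ell]_z|-1)$, and harvesting at least one extra line $\overline{uv}$ from an edge $uv$ inside $N(z)$. But from there you pivot to ``first show $G$ is complete multipartite, then compute the lines of $K_{n_1,\dots,n_k}$,'' and the pivotal step --- that fewer than $n$ lines forces every pair of non-adjacent vertices to lie in a common nontrivial class (equivalently, to have identical neighborhoods) --- is only asserted, with a plan described as ``careful bookkeeping'' and ``bootstrapping over all vertices.'' This is the entire difficulty of the proposition, and you correctly identify it as the main obstacle, but you do not close it. The paper never passes through complete multipartiteness as an intermediate target; instead it partitions $N(x)$ by class size into $A_1,A_2,A_3$, further splits $A_2$ into $A_2'$ and $A_2''$, builds several explicit pairwise disjoint families of lines ($\cA_3$, $\cA_2'$, $\cA_2''$, $\cA_2'''$) that avoid $\lin^x$, and squeezes the hypothesis $|\cL(G)|\leq n-1$ against their combined cardinality to force $a_3=|A_2'|=0$, $|A_2''|\in\{2,4,6\}$ and $|N^2(x)|=1$; the multipartite structure only emerges at the very end. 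Nothing in your sketch indicates how the ``tightness'' of the inequality pins down the neighborhoods of vertices far from $z$ (in particular the vertices of $N^2(z)$, which belong to no nontrivial class of $R^z$), so the reduction to complete multipartite graphs is a genuine gap rather than a routine verification.

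A secondary, fixable error: for adjacent $a,b$ in $K_{n_1,\dots,n_k}$ the line $\overline{ab}$ equals $P_a\cup P_b$ (the union of the two parts containing $a$ and $b$), not $\{a,b\}$ together with the singleton parts; for $a,b$ in a common part $P$ it is $V\setminus(P\setminus\{a,b\})$. The resulting count --- $\binom{k}{2}$ lines from pairs of parts, one universal line if some part has size exactly two, and $\binom{|P|}{2}$ lines from each part of size at least three --- does confirm that among locally connected complete multipartite graphs only $K_{1,2,2}$, $K_{2,2,2}$ and $K_{2,2,2,2}$ have fewer lines than vertices, so your final step would go through once the structural reduction is actually proved.
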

\begin{proof} Let $N^2(x)$ be the set of all vertices at distance two from $x$. 
Let 
$$\cL^x_1=\{\overline{xu}\mid u\in N(x)\}$$ and 
$$\cL^x_2=\{\overline{xu}\mid u\in N^2(x)\}.$$ 
Then, $\cL^x=\cL^x_1\cup \cL^x_2$, as $V=N(x)\cup N^2(x)\cup \{x\}$, since $G$ has diameter two. This latter fact and Lemma \ref{l:zinthemiddle}
imply that $\cL^x_1\cap \cL^x_2$ is empty. Moreover, if $b\in [a]_x$, $b\neq a$, then $bxa\in \bet_G$ which shows that $a,b\in N(x)$, since $d_G(a,b)\leq 2$. Hence, $|\cL^x_2|=|N^2(x)|$. 

We also have that:
\begin{clai}\label{cl:atwounionathreenonempty}
$|\cL^x_1|<|N(x)|$. 
\end{clai}
\begin{proof} Since $|N^2(x)|=|\cL^x_2|$  and $\cL^x_1\cap \cL^x_2$ is empty, it is enough to prove that 
$$|\cL^x_1\cup\cL^x_2|<|N(x)|+|N^2(x)|.$$ 

Let us assume that $|\cL^x_1\cup\cL^x_2|=|N(x)|+|N^2(x)|=n-1$. Then,  we get that $\cL(G)=\cL^x_1\cup \cL^x_2$, since $|\cL(G)|\leq n-1$.

Since $G$ is locally connected, the neighbors of $x$ induce a connected subgraph of $G$. Hence, each neighbor $u$ of $x$ has a common neighbor $u'$ with $x$ which implies that the line $\ell=\overline{uu'}$ does not contain $x$. But this shows that $\ell$ does not belong to $\cL^x_1\cup \cL^x_2=\cL(G)$, a contradiction.
\end{proof}

\begin{clai}
    For each $u\in N(x)$, $[u]_x$ is an independent set which is also a module of $G$.
\end{clai}
\begin{proof}
By definition of the relation $R^x$, we know that for each $v\in [u]_x$,  $\overline{xu}=\overline{xv}$. Since $u,v\in N(x)$ this implies that $u$ and $v$ can not be adjacent. Therefore, each equivalence class of $R^x$ is an independent set.

 To see that $[u]_x$ is a module, we have to prove that $zu\in E \iff zu'\in E$, for each $u'\in [u]_x$.  If $z\in N(x)$, then $zu\in E$ if and only if $z\notin \overline{xu}$. This happens if and only if $z\notin \overline{xu'}$, for each $u'\in [u]_x$. In turns, this latter fact holds if and only if $zu'\in E$, for each $u'\in [u]_x$. Similarly, if $z\in N^2(x)$, $zu\in E$ if and only if $z\in \overline{xu}$ if and only if $zu'\in E$, for each $u'\in [u]_x$. 
\end{proof}

\begin{clai}\label{cl:desline}
    Let $\ell=\overline{vw}$ any line in $G$. Then, for each $u\in N(x)\setminus\{v,w\}$, $[u]_x\subseteq \ell$ or $[u]_x\cap \ell$ is empty. Moreover, for $v,w\in N(x)$, 
    when $vw\in E$, we have that $[v]_x\cup [w]_x\subseteq \ell$ and when $vw\notin E$ we have that  $([v]_x\cup [w]_x)\cap \ell=\{v,w\}$.
\end{clai}
\begin{proof} 
When $vw\in E$ we know that for each $u\in N(x)\setminus\{v,w\}$,  $u\in \ell$ if and only if $u\in (N(v)\cup N(w))\setminus (N(v)\cap N(w)).$ 

If $[u]_x\cap \ell$ is not empty we prove that $[u]_x\subseteq \ell$. Let $v\in [u]_x\cap \ell$. Since $[v]_x=[u]_x$ is a module, when $v\in \ell$  we get that $[v]_x=[u]_x\subseteq N(v)\setminus N(w)$ or $[v]_x=[u]_x\subseteq N(w)\setminus N(v)$. Hence, if $\ell\cap [u]_x$ is non-empty, then $[u]_x\subseteq \ell$.

For the second statement, since $vw\in E$, we have that $[v]_x\subseteq N(w)\setminus N(v)$  and $[w]_x\subseteq N(v)\setminus N(w)$,  because $[v]_x$ and $[w]_x$ are independent modules of $G$. Hence $[v]_x\cup [w]_x\subseteq \ell=\overline{vw}$.

When $vw\notin E$ we have that for each $u\in N(x)\setminus\{v,w\}$,  $u\in \ell$ implies that $u\in N(v)\cap N(w)$ which in turns implies that $[u]_x\subseteq N(v)\cap N(w)$ and thus $[u]_x\subseteq \ell$, since $[u]_x$ is a module of $G$. Since $vw\notin E$, we have $[v]_x\cup [w]_x$ is an independent set which implies that $([v]_x\cup [w]_x)\cap \ell=\{v,w\}$
\end{proof}
We partition $N(x)$ in the following three sets $A_i$, $i\in \{1,2,3\}$.
$$A_1=\{u\in N(x)\mid |[u]_x|=1\},$$
$$A_2=\{u\in N(x)\mid |[u]_x|=2\}$$
and
$$A_3=\{u\in N(x)\mid |[u]_x|\geq 3\}.$$
Then, 
$$|N(x)|=|A_1|+|A_2|+|A_3|.$$

From Claim \ref{cl:atwounionathreenonempty}
we know that $|N(x)|>|\cL^x_1|$ implying that $A_2\cup A_3$ is non-empty. Let $a_3$ be the number of lines $\overline{xu}$ in $\cL^x_1$ with $u\in A_3$. Then, the set $\cL^x_1$ contains $|A_1|+|A_2|/2+a_3$ lines.
Let 
$$A_2'=\{u\in A_2\mid \exists v\in N(x)\setminus [u]_x, uv\notin E\}$$ and 
$A''_2=A_2\setminus A_2'$.

We shall prove that the set $A''_2$ is non-empty. For this purpose, in the following claims, we define two sets of lines, $\cA_3$ and $\cA_2'$, having at least  $|A'_2|$ and $|A_3|$ lines, respectively, such that $\cL^x_1\cup \cL^x_2$, $\cA_3$ and $\cA'_2$ are pairwise disjoint.

\begin{clai}\label{cl:athree}
Let $\cA_3$ given by 
$$\cA_3=\bigcup_{u\in A_3}\cA_u,$$
where 
$$\cA_u=\{\overline{u'u''}\mid u',u''\in [u]_x\},$$
for $u\in A_3$. Then $\cA_3\cap (\cL^x_1\cup \cL^x_2)$ is empty and $|\cA_3|   \geq |A_3|$.

\end{clai}

\begin{proof}
    From Claim \ref{cl:desline} we know that each line $\overline{u'u''}$ in $\cA_u$ has exactly two vertices in $[u]_x$: $u',u''$. Thus, $|\cA_u|=\binom{|[u]_x|}{2}\geq |[u]_x|.$
From the same claim we have that each line in $\cL^x_1\cup \cL^x_2$ either contains $[v]_x$ or has not vertex from $[v]_x$, for each $v\in A_3$. Hence, $\cA_3\cap (\cL^x_1\cup \cL^x_2)$ is empty. 

Let $u,u',v\in A_3$ with $v\notin [u]_x$ and $u'\in [u]_x$. Then, $[v]_x\cap \overline{uu'}\in \{\emptyset, [v]_x\}$ which implies that $\cA_u\cap \cA_v$ is empty, and then $|\cA_3|\geq |A_3|$.
\end{proof}

\begin{clai}\label{cl:linetwoprime}
Let $\cA'_2$ be given by 
$$\cA'_2=\bigcup_{u\in A'_2}\bigcup_{v\in N(x)\setminus [u]_x, uv\notin E}\cA_{u,v},$$
where
$$\cA_{u,v}=\{\overline{st}\mid s\in [u]_x, t\in [v]_x\}.$$ 
Then, $\cA'_2\cap (\cL^x_1\cup \cL^x_2\cup \cA_3)$ is empty and $|\cA'_2|\geq |A'_2|.$
\end{clai}
\begin{proof}
From Claim \ref{cl:desline} we know that for each $u\in A_2'$ and $v\in N(x)\setminus [u]_x$ such that $uv\notin E$,  each line $\ell\in \cA_{u,v}$ satisfies 
$\ell\cap([u]_x\cup [v]_x)=\{u,v\}$ and $\ell\cap [w]_x\in \{\emptyset, [w]_x\}$, for each $w\in N(x)\setminus\{u,v\}$. Then, 
for $u'\in A'_2\setminus [u]_x, v'\in N(x)\setminus ([u']\cup [v]_x)$ the sets $\cA_{u,v}$ and $\cA_{u',v'}$ are disjoint.
The same claim shows that 
$\cA_{u,v}\cap (\cL^x\cup \cA_w)$ is empty, for each $w\in A_3$. In fact, a line in 
$\cA_{u,v}$ has exactly one vertex in $[u]_x$, the vertex $u$, and one vertex in $[v]_x$,  the vertex $v$, and $[u]_x\neq [v]_x$, while a line in $\cA_w$ contains exactly two vertices in $[w]_x$, and do not overlap with neither $[u]_x$ nor  $[v]_x$. 
We also know that $|\cA_{u,v}|=|[u]_x||[v]_x|\geq |[u]_x|=2$ from which we get that $\cA'_2$ 
contains at least $|A'_2|$ lines.
\end{proof}

\begin{clai}\label{cl:atwotwoprimesnonempty}
The set $A''_2$ is non-empty.    
\end{clai}

\begin{proof}
From previous claims we have that the set $\cL^x_1\cup \cL^x_2\cup \cA_3\cup \cA'_2$ contains
at least 
$$|N^2(x)|+|A_1|+|A_2|/2+a_3+|A_3|+|A'_2|=|N^2(x)|+|N(x)|+|A'_2|/2-|A''_2|/2+a_3$$
lines. Since $|\cL(G)|\leq n-1= |N(x)|+|N^2(x)|$ we get that $|A'_2|/2+a_3\leq |A''_2|/2$ which implies that $A''_2$ is non-empty as, from Claim \ref{cl:atwounionathreenonempty}, we know that $A_2\cup A_3$ is non-empty. 
\end{proof}

\begin{clai} If $N(x)\neq A''_2$, then $G$ is isomorphic to $K_{1,2,2}$. 
\end{clai}
\begin{proof}
    
By definition of $A''_2$ we know that, for each $u\in A''_2$ and $v\in N(x)\setminus [u]_x$ the edge $uv$ belongs to $E$. Then, for each $w\in N(x)\setminus ([u]_x\cup [v]_x)$ we have that $wu$ and $wv$ belong to $E$. From this and the fact that $[u]_x$ and $[v]_x$ are independent sets we get that 
$$\overline{uv}\cap N(x)=[u]_x\cup [v]_x.$$

Therefore, 
the set of lines 
$$\cA''_2=\{\overline{uv}\mid u,v\in A''_2, u\notin [v]_x\}$$
has exactly $\binom{|A''_2|/2}{2}$ lines and 
the set of lines 
$$\cA'''_2=\{\overline{uv}\mid u\in N(x)\setminus A''_2, v\in A_2''\}$$
has at least $|A''|/2$ lines, since $N(x)\setminus A''_2$ is non-empty. 
Moreover, $\cA''_2\cap \cA'''_2$ is empty. 

Therefore the set $$\cL^x\cup \cA_3\cup \cA'_2 \cup \cA''_2\cup \cA'''_2$$
contains at least 
\begin{eqnarray*}
    &  &|\cL^x_2|+|\cL^x_1|+|A_3|+|A_2'|+\binom{|A''_2|/2}{2}+|A''_2|/2\\
    &=& |\cL^x_2|+|A_1|+|A_2|/2+a_3+|A_3|+|A_2'|+\binom{|A''_2|/2}{2}+|A''_2|/2\\
    &=&n-1+a_3+|A'_2|/2+\binom{|A''_2|/2}{2},
\end{eqnarray*}
where the last equality comes from the fact that  $$n-1=|N^2(x)|+|N(x)|=|\cL^x_2|+|A_1|+|A_2'|+|A_2''|+|A_3|.$$

 Since we have that $n-1\geq |\cL(G)|$ and $A''_2$ is non-empty, we get that $a_3=|A'_2|=0$ and $|A''_2|=2$. Thus  $N(x)=A_1\cup\{u,v\}$, with $A''_2=\{u,v\}$ and 
 $$\cL(G)=\cL^x\cup \cA_3\cup \cA'_2 \cup \cA''_2\cup \cA'''_2=\cL^x\cup \cA'''_2.$$
 
 This shows that for each $b\in N^2(x)$ we have that  $bu,bv\in E$. Otherwise, the line $\overline{bu}$ does not contain $x$ and $\overline{bu}\cap [u]_x=\{u\} $ which proves that it does not belong to $\cL^x_1\cup \cL^x_2\cup \cA'''_2$. 
 
 From this we get that the line $\overline{uv}$ contains all the vertices, showing that it belongs to $\cL^x_1\cup \cL_2$. But no line in $\cL^x_1$ can contain $N(x)$, since $G$ is locally connected.  From this we get that $\overline{uv}$ belongs to $\cL^x_2$ and thus $N^2(x)=\{b\}$.

Assume that $A_1$ has at least two distinct vertices $u$ and $u'$. Then, $\overline{xu}\neq \overline{xu'}$ which implies, w.l.o.g., that $u$ has a neighbor $u''$ in $A_1$ which is not a neighbor of $u'$ (it could be $u'$). Hence, 
the line $\overline{uu''}$ does not intersect $A''_2\cup \{x\}$ which implies that it does not belong to $\cL^x\cup \cA'''_2.$ Thus, $A_1$ is a singleton.
 Since $G$ is locally connected we have that  the set of neighbors of $b$ induces a connected subgraph. Then, $b$ is adjacent to the vertex in $A_1$,  
 which shows that  $G$ is isomorphic to $K_{1,2,2}$. 
\end{proof}

Now consider the case when $N(x)=A''_2$. 
Thus, since $G$ is locally connected we have that $|A''_2|\geq 4$. 
In this case, the set $\cL^x_1\cup \cL^x_2\cup \cA''_2$ has $$|N^2(x)|+|A''_2|/2+\binom{|A''_2|/2}{2}=
n-1+\binom{|A''_2|/2}{2}-|A''_2|/2
$$ lines.
Thus, $\binom{|A''_2|/2}{2}-|A''_2|/2\leq 0$. This  implies that $|A''_2|\in \{4,6\}$ which shows that 
$\binom{|A''_2|/2}{2}-|A''_2|/2\in \{-1,0\}$.

As before, if for some $b\in N^2(x)$ and $u\in A''_2$ we have that $bu\notin E$, then the lines $\overline{ub}$ and 
$\overline{vb}$, for $[u]_x=\{u,v\}$, are different, they do not contains $x$ and contain exactly one vertex in $[u]_x$. Thus, 
they do not belong to $\cL^x_1\cup \cL^x_2\cup \cA''_2$ which is not possible since 
the set $\cL^x_1\cup \cL^x_2\cup \cA''_2$ has already at least $n-2$ lines.

Therefore, for each $b\in N^2(x)$ and $u\in A''_2$ we have that $ub\in E$ and we get that the line $\ell=\overline{uv}$, with $u\in A''_2$ and $v\in [u]_x$, contains every vertex of $G$. 

In order to conclude, we need to show that $N^2(x)=\{b\}$. For the sake of a contradiction, let us assume that there is $b'\in N^2(x)$, with $b'\neq b$. 
Then, the line $\ell$ does not belong to $\cL^x_1\cup \cL^x_2$ which implies that 
$$\cL(G)=\cL^x_1\cup\cL^x_2\cup \cA''_2\cup \{\ell\}.$$
But, the line $\overline{bb'}$ does not belong to $\cL^x_1\cup\cL^x_2\cup\{\ell\}$, since it does not contains $x$, and it does not belong to $\cA''_2$ either, as no line in $\cA''_2$ has a vertex in $N^2(x)$. This shows the contradiction $\overline{bb'}\notin \cL(G)$.

Therefore, $N^2(x)=\{b\}$ and then 
the graph $G$ is isomorphic to either $K_{2,2,2}$ or to $K_{2,2,2,2}$.

\end{proof}

\section{Conclusion}
One may wonder whether Proposition \ref{p:diametertwo} is valid for any 2-connected graph. We know that it is not true. For an integer $k\geq 3$, consider the graph $H_{2k}$ obtained from two complete graphs with $k$ vertices each, joined by a perfect matching. This graph has diameter two and $\binom{k}{2}+1$ lines. Then, the graphs $H_6$ and $H_8$ have less lines than vertices. The same is valid for the induced subgraph of $H_6$ with five vertices, $H_5$, usually called the \emph{house}. So far, these together with the graph appearing in Theorem \ref{th:class} are the only 2-connected graphs of diameter two with less lines than vertices known to us and we believe that there are no others.

Proposition \ref{p:diametermorethantwo} is also not valid for arbitrary 2-connected graphs of diameter at least three. Indeed, by deleting an edge from the perfect matching used in the definition of the graph $H_6$, we get a 2-connected graph $H'_6$ of diameter three with only four lines. The same holds true for the graph $H'_8$ obtained when applying previous operation to the graph $H_8$. In this latter case, the resulting graph has seven lines and eight vertices. A third example is the graph $H''_8$ obtained from a complete graph with four vertices joined by a perfect matching to two vertex disjoint edges. This graph is 2-connected, has eight vertices, diameter three and only seven lines.

\bibliographystyle{plain}
\bibliography{sample}

\end{document}